\numberwithin{equation}{section}
\newtheorem{theorem}{Theorem}[section]
\begin{document}
\author{Alexander E Patkowski}
\title{A Generalized Davenport Expansion}

\maketitle
\begin{abstract} We prove a new generalization of Davenport's Fourier expansion of the infinite series involving the fractional part function over arithmetic functions. A new Mellin transform related to the Riemann zeta function is also established. \end{abstract}

\keywords{\it Keywords: \rm Davenport expansions; Riemann zeta function; Fourier series}

\subjclass{ \it 2010 Mathematics Subject Classification 11L20, 11M06.}

\section{Introduction and Main Result} In 1937, Davenport presented infinite series over arithmetic functions, using the Fourier series of the fractional part function, $\{x\}=x-[x],$ where $[x]$ denotes the integer part of $x.$ The main result [4, eq.(2)] is the explicit formula,
\begin{equation}\sum_{n\ge1}\frac{a(n)}{n}\left(\{nx\}-\frac{1}{2}\right)=-\frac{1}{\pi}\sum_{n\ge1}\frac{A(n)}{n}\sin(2\pi n x).\end{equation}
Here $a(n)$ is an arithmetic function and $A(n)=\sum_{d|n}a(d).$ To date, many authors have researched (1.1) and associated identities, including its convergence [1, 2, 5, 6, 10, 11]. The principal idea of proving (1.1) through Mellin transforms can be found in Segal [11]. A number of authors have generalized (1.1) through use of periodic Bernoulli polynomials and Mellin inversion [2, 6]. 
\par The purpose of this article is to offer a new generalization of (1.1), and in accomplishing this we obtain a new Mellin transform. Our main theorem provides a Fourier series with coefficients for both sine and cosine, giving (1.1) as the special case $N=1.$ The Mellin transform given in our Theorem 1.2 is a more general form of an integral that has been used to obtain many interesting results, including the functional equation for the Riemann zeta function. Recall that the Riemann zeta function is $\zeta(s)=\sum_{n\ge1}n^{-s},$ for $\Re(s)>1.$ 

\begin{theorem}\label{thm:thm1} For $N\ge1,$ and $F_k(n):=\sum_{d|n}d^{-k}a(\frac{n}{d})$ we have for real $x,$
$$\sum_{n\ge1}\frac{a(n)}{n}\left(\{nx\}^{N}+N!\sum_{k\ge0}^{N-1}\frac{\zeta(-k)}{(N-k)!k!}\right)=$$
$$ -N!\sum_{k\ge0}^{N-1}\frac{(-1)^k}{(N-k)!}\left(\frac{\cos(\frac{\pi}{2}k)}{\pi}\sum_{n\ge1}\frac{F_k(n)}{n}\sin(2 \pi n x)-\frac{\sin(\frac{\pi}{2}k)}{\pi}\sum_{n\ge1}\frac{F_k(n)}{n}\cos(2 \pi n x)\right).$$
\end{theorem}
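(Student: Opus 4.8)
The plan is to adapt Segal's Mellin-transform proof of (1.1), with the classical transform $\int_0^\infty(\{x\}-\tfrac12)x^{s-1}\,dx=\zeta(-s)/s$ replaced by the more general transform of Theorem 1.2. Write $\alpha(s)=\sum_{n\ge1}a(n)n^{-s}$ for the Dirichlet series of $a$, so that the Dirichlet series of $F_k=a*(n\mapsto n^{-k})$ is $\alpha(s)\zeta(s+k)$. The first step is to take the Mellin transform in $x$ of the left-hand side: substituting $u=nx$ and applying Theorem 1.2 termwise gives
\[ \int_0^\infty\Bigl(\sum_{n\ge1}\frac{a(n)}{n}\bigl(\{nx\}^N+C_N\bigr)\Bigr)x^{s-1}\,dx=\alpha(s+1)\,\mathcal{M}_N(s), \]
where $C_N=N!\sum_{k=0}^{N-1}\zeta(-k)/((N-k)!\,k!)$ and $\mathcal{M}_N(s)$ is the transform supplied by Theorem 1.2, taken in (a meromorphic continuation to) the vertical strip where $\alpha(s+1)$ is also defined.

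By Theorem 1.2, $\mathcal{M}_N(s)$ is a finite sum over $0\le k\le N-1$ whose $k$-th summand is a constant multiple (involving $N!/(N-k)!$, $(-1)^k$, and $1/\pi$) of $\Gamma(s)(2\pi)^{-s}\bigl(\cos(\tfrac{\pi k}{2})\sin(\tfrac{\pi s}{2})-\sin(\tfrac{\pi k}{2})\cos(\tfrac{\pi s}{2})\bigr)\zeta(s+k+1)$. This is the natural generalization of the $N=1$ identity
\[ \int_0^\infty\Bigl(\{x\}-\tfrac12\Bigr)x^{s-1}\,dx=\frac{\zeta(-s)}{s}=-\frac{\Gamma(s)\sin(\tfrac{\pi s}{2})\,\zeta(s+1)}{\pi\,(2\pi)^s}, \]
which already comes out of the functional equation of $\zeta$; the values $\zeta(-k)$ in $C_N$ enter as the residual constants that make the transform well defined. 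Now multiply by $\alpha(s+1)$: each factor $\zeta(s+k+1)$ combines with $\alpha(s+1)$ into $\alpha(s+1)\zeta(s+1+k)=\sum_{n\ge1}F_k(n)n^{-s-1}$, so $\alpha(s+1)\mathcal{M}_N(s)$ becomes a finite sum over $k$ of constants times $\sum_{n\ge1}\tfrac{F_k(n)}{n}\,\Gamma(s)\bigl(\cos(\tfrac{\pi k}{2})\sin(\tfrac{\pi s}{2})-\sin(\tfrac{\pi k}{2})\cos(\tfrac{\pi s}{2})\bigr)(2\pi n)^{-s}$.

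The second step is Mellin inversion in $x$, using the elementary transforms $\int_0^\infty\sin x\,x^{s-1}\,dx=\Gamma(s)\sin(\tfrac{\pi s}{2})$ and $\int_0^\infty\cos x\,x^{s-1}\,dx=\Gamma(s)\cos(\tfrac{\pi s}{2})$ for $0<\Re(s)<1$, so that $\tfrac{1}{2\pi i}\int_{(c)}\Gamma(s)\sin(\tfrac{\pi s}{2})(2\pi n)^{-s}x^{-s}\,ds=\sin(2\pi nx)$ and likewise for the cosine. Inverting term by term sends $\alpha(s+1)\mathcal{M}_N(s)$ back to exactly the combination of sine and cosine series on the right of Theorem 1.1: $\cos(\tfrac{\pi k}{2})$ produces the sine series over $F_k$ and $-\sin(\tfrac{\pi k}{2})$ the cosine series, with the vanishing of $\sin(\tfrac{\pi k}{2})$ for even $k$ and of $\cos(\tfrac{\pi k}{2})$ for odd $k$ collapsing half the terms. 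Specializing $N=1$ — only $k=0$ survives, $\cos0=1$, $\sin0=0$, $F_0=A$, $C_1=\zeta(0)=-\tfrac12$ — recovers Davenport's (1.1), a consistency check I would make explicit.

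The routine parts are the trigonometric bookkeeping, e.g. $\cos(\tfrac{\pi k}{2})\sin(\tfrac{\pi s}{2})-\sin(\tfrac{\pi k}{2})\cos(\tfrac{\pi s}{2})=\sin(\tfrac{\pi(s-k)}{2})$ and its interaction with the $(-1)^k$, together with tracking the powers of $2\pi$ emitted by the functional equation. The main obstacle is the analytic justification: the series over $n$ need not converge absolutely, so one must justify interchanging $\sum_n$ with $\int_0^\infty(\,\cdot\,)x^{s-1}\,dx$, placing the inversion contour inside the strip where $\alpha(s+1)\mathcal{M}_N(s)$ is holomorphic (possibly after a shift past the pole at $s=0$), and the termwise Mellin inversion — which I expect to need either a growth hypothesis such as $a(n)=O(n^\varepsilon)$ or an argument via Cesàro/Abel means, in the spirit of the convergence analyses cited in the Introduction. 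The other genuine ingredient is Theorem 1.2 itself, whose proof is a self-contained computation with the functional equation of $\zeta$ (and which fixes the exact constant $C_N$); it can be completed beforehand.
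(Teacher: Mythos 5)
Your proposal is correct and follows essentially the same route as the paper: both generalize Segal's Mellin-transform argument by combining Theorem 1.2 with a contour shift past the pole at $s=0$ (which produces the constant involving the $\zeta(-k)$), the functional equation of $\zeta$, the Dirichlet-series identity $\alpha(s)\zeta(s+k)=\sum_n F_k(n)n^{-s}$, and the Mellin transforms of $\sin$ and $\cos$ to read off the Fourier series. The only difference is presentational — you run the transform forward and then invert, while the paper starts from the inverse Mellin representation of $\{y\}^N$ and sums over $n$ inside the integral — and the analytic caveats you flag (termwise interchange, growth of $a(n)$) are left at the same level of detail in the paper itself.
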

Our proof ensures that if the infinite series on one side of the theorem converges then the infinite series on the other side converges as well. The Mellin transform we need to establish Theorem 1.1 is given in the following, and fits in neatly with the family of integrals given in [3]. Let $(s)_k=\Gamma(s+k)/\Gamma(s)$ denote the Pochhammer symbol. \begin{theorem} For $0<\Re(s)<N,$ we have,

$$\int_{0}^{\infty}\{y\}^Ny^{-s-1}dy=N!\sum_{k\ge0}^{N-1}\frac{(-1)^k\zeta(s-k)}{(N-k)!(-s)_{k+1}}.$$
\end{theorem}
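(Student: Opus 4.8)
The plan is to prove the identity by induction on $N$, the main ingredient being a one-step recursion in $N$ obtained from integration by parts. Write $I_N(s)=\int_0^\infty\{y\}^N y^{-s-1}\,dy$; since $\{y\}=y$ on $(0,1)$ the integrand is $O(y^{N-\Re(s)-1})$ near $0$ and $O(y^{-\Re(s)-1})$ near $\infty$, so $I_N(s)$ converges absolutely and is holomorphic on the strip $0<\Re(s)<N$. Denote by $G_N(s)=N!\sum_{k=0}^{N-1}\frac{(-1)^k\zeta(s-k)}{(N-k)!(-s)_{k+1}}$ the right-hand side of the theorem, a function meromorphic on all of $\mathbb{C}$.

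For the base case $N=1$ I would invoke the classical evaluation $I_1(s)=-\zeta(s)/s$ on $0<\Re(s)<1$ (one of the standard routes to the analytic continuation of $\zeta$): split at $y=1$, use $\int_0^1 y^{-s}\,dy=1/(1-s)$, and for $\Re(s)>1$ compute $\int_1^\infty\{y\}y^{-s-1}\,dy=\int_1^\infty y^{-s}\,dy-\int_1^\infty[y]y^{-s-1}\,dy=\frac1{s-1}-\frac{\zeta(s)}{s}$, the last step from Abel summation of $\sum_{n\ge1}n\big(n^{-s}-(n+1)^{-s}\big)=\zeta(s)$; both sides being holomorphic for $\Re(s)>0$ (the pole of $\zeta$ at $1$ cancels), this persists, and adding the pieces gives $I_1(s)=-\zeta(s)/s$. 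Since $(-s)_1=-s$, this is exactly $G_1(s)$.

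For the inductive step ($N\ge2$) I would restrict to the nonempty substrip $N-1<\Re(s)<N$, on which also $\Re(s)>1$ and $\Re(s-1)\in(0,N-1)$, so every integral and series below converges absolutely. On $(n,n+1)$ one has $\{y\}^N=(y-n)^N$; integrating $\int_n^{n+1}(y-n)^N y^{-s-1}\,dy$ by parts against the antiderivative $y^{-s}/(-s)$ and summing over $n\ge0$, the boundary contributions collapse to $-\frac1s\sum_{m\ge1}m^{-s}=-\zeta(s)/s$ (the $y\to0$ term vanishing because $\Re(s)<N$) while the leftover integrals reassemble into $\frac Ns\int_0^\infty\{y\}^{N-1}y^{-s}\,dy=\frac Ns I_{N-1}(s-1)$, giving
$$s\,I_N(s)=N\,I_{N-1}(s-1)-\zeta(s).$$
By the induction hypothesis $I_{N-1}(s-1)=G_{N-1}(s-1)$ on this substrip, so it remains to verify the purely algebraic identity $s\,G_N(s)=N\,G_{N-1}(s-1)-\zeta(s)$. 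Using $(-s)_{k+1}=(-s)(1-s)_k$, hence $s/(-s)_{k+1}=-1/(1-s)_k$, the $k=0$ term of $s\,G_N(s)$ is $-\zeta(s)$, and relabelling $k\mapsto k+1$ (with $(-(s-1))_{k+1}=(1-s)_{k+1}$) matches the remaining terms of $s\,G_N(s)$ against $N\,G_{N-1}(s-1)$ one by one. Thus $I_N(s)=G_N(s)$ on $N-1<\Re(s)<N$, and since $I_N$ and $G_N$ are meromorphic on the connected strip $0<\Re(s)<N$, the identity theorem upgrades this to equality on the whole strip.

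The step I expect to be the real obstacle is not the algebra but the interplay of domains of convergence: the Dirichlet series for $\zeta(s)$ produced by the integration by parts demands $\Re(s)>1$, whereas $I_N(s)$ lives only on $\Re(s)<N$, so the recursion cannot be run in a single strip. Confining the recursion to $N-1<\Re(s)<N$ is what keeps all the manipulations legitimate, at the price of recovering the full strip afterwards by analytic continuation; a harmless byproduct is that the apparent poles of the individual summands $\zeta(s-k)/(-s)_{k+1}$ of $G_N$ at $s=1,\dots,N-1$ must cancel, since they sum to the holomorphic function $I_N$.
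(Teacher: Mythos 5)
Your proof is correct, but it follows a genuinely different route from the paper. The paper splits the integral at $y=1$, rewrites $\int_1^\infty \{y\}^N y^{-s-1}\,dy$ as $\int_0^1 y^N\zeta(s+1,y+1)\,dy$, and then imports the closed form for $\int_0^1 y^N\zeta(s,y)\,dy$ from Espinosa--Moll, so the identity is obtained directly on the whole strip $0<\Re(s)<N$ in one stroke, at the price of resting on an external evaluation of a Hurwitz-zeta moment. You instead run an induction on $N$: integration by parts on each interval $(n,n+1)$ yields the recursion $s\,I_N(s)=N\,I_{N-1}(s-1)-\zeta(s)$, you check that the right-hand side $G_N$ satisfies the same recursion via $(-s)_{k+1}=(-s)(1-s)_k$ and the reindexing $k\mapsto k+1$, and you recover the full strip by the identity theorem. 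All the individual steps check out: the base case $I_1(s)=-\zeta(s)/s$ is the classical computation, the boundary terms and the domain bookkeeping (confining the recursion to $N-1<\Re(s)<N$, where both $\Re(s)>1$ and $\Re(s-1)\in(0,N-1)$ hold) are handled correctly, and the algebraic verification of $s\,G_N(s)=N\,G_{N-1}(s-1)-\zeta(s)$ is right. Your approach is self-contained and elementary, needing no facts about the Hurwitz zeta function beyond $\zeta(s)$ itself, but it only establishes the identity on a substrip before an analytic-continuation step; the paper's approach avoids continuation entirely but is only as self-contained as the cited formula (1.3). As a bonus, your argument makes transparent why the apparent poles of the summands at $s=1,\dots,N-1$ must cancel.
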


\begin{proof} A direct computation gives
\begin{equation}\begin{aligned}\int_{1}^{\infty}\{y\}^{N} y^{-s-1}dy
&=\sum_{k\ge1}\int_{k}^{k+1}\{y\}^N y^{-s-1}dy\\
&=\sum_{k\ge1}\int_{0}^{1}\frac{y^N}{(y+k)^{s+1}}dy \\
&=\int_{0}^{1}y^{N}\zeta(s+1,y+1)dy.\end{aligned}\end{equation}

From [7, pg.184, eq.(12.2)] we have 
\begin{equation}\int_{0}^{1}y^{N}\zeta(s,y)dy=N!\sum_{k\ge0}^{N-1}(-1)^k\frac{\zeta(s-k-1)}{(N-k)!(1-s)_{k+1}},\end{equation} where $\zeta(s,y)$ is the Hurwitz zeta function. The left side may be written \begin{equation}\int_{0}^{1}y^{N}\zeta(s,y+1)dy+\frac{1}{N-s+1}.\end{equation}
Note that, for $0<\Re(s)<N,$ 
\begin{equation}\int_{0}^{1}\frac{\{y\}^{N}}{y^{s+1}}dy=\int_{0}^{1}y^{N-s-1}dy=\frac{1}{N-s}.\end{equation}
Combining (1.2), (1.3), (1.4), and (1.5) gives the theorem.
\end{proof}

\section{Proof of Main Theorem}
To prove Theorem 1.1, we first obtain a different form of our main integral result contained in Theorem 1.2. After this is accomplished we generalize the proof of Segal [11].

\begin{proof}[Proof of Theorem~\ref{thm:thm1}] By Theorem 1.2,

\begin{equation}\{y\}^N=\frac{N!}{2\pi i}\int_{(c)}\left(\sum_{k\ge0}^{N-1}\frac{(-1)^k\zeta(s-k)}{(N-k)!(-s)_{k+1}}\right)y^sds,\end{equation}
if $0<\Re(s)=c<N.$
The integrand has a simple pole at $s=0.$ Note that
\begin{equation}\lim_{s\rightarrow0}\left(s\frac{\zeta(s-k)}{(-s)_{k+1}}y^{s}\right)=-\frac{\zeta(-k)}{\Gamma(k+1)}.\end{equation}
Therefore, computing the residue at the pole $s=0,$ and moving the line of integration to $-1<\Re(s)=d<0$ in (2.1),
\begin{equation}\frac{1}{2\pi i}\int_{(c)}\left(N!\sum_{k\ge0}^{N-1}\frac{(-1)^k\zeta(s-k)}{(N-k)!(-s)_{k+1}}\right)y^sds\end{equation}
$$=N!\sum_{k\ge0}^{N-1}\frac{\zeta(-k)}{(N-k)!k!}+\frac{1}{2\pi i}\int_{(d)}\left(N!\sum_{k\ge0}^{N-1}\frac{(-1)^k\zeta(s-k)}{(N-k)!(-s)_{k+1}}\right)y^sds.$$
Collectively, we have for $y>0$
\begin{equation}\{y\}^N-N!\sum_{k\ge0}^{N-1}\frac{\zeta(-k)}{(N-k)!k!}\end{equation}
$$=\frac{1}{2\pi i}\int_{(d)}\left(N!\sum_{k\ge0}^{N-1}\frac{(-1)^k\zeta(s-k)}{(N-k)!(-s)_{k+1}}\right)y^sds,$$
Inverting the desired series over the coefficients $a(n)$ in (2.4), we have
\begin{equation}\sum_{n\ge1}\frac{a(n)}{n}\left(\{ny\}^N-N!\sum_{k\ge0}^{N-1}\frac{\zeta(-k)}{(N-k)!k!}\right) \end{equation}
$$=N!\sum_{k\ge0}^{N-1}\frac{(-1)^k}{(N-k)!}\frac{1}{2\pi i}\int_{(d)}\frac{\zeta(s-k)}{(-s)_{k+1}}L(1-s)y^{s}ds,$$

Notice that, by the functional equation for the Riemann zeta function [12, pg.13, Theorem 2.1],
$$\begin{aligned}\frac{\zeta(s-k)L(1-s)y^{s}}{(-s)_{k+1}} &=\frac{\sin(\frac{\pi}{2}(s-k))\Gamma(1+k-s)\zeta(1+k-s)L(1-s)\Gamma(-s)y^s}{\Gamma(k+1-s)} \\
&=\Gamma(-s)\sin(\frac{\pi}{2}(s-k))\zeta(1+k-s)L(1-s)y^{s},\end{aligned}$$
and $\sin(\frac{\pi}{2}(s-k))=\sin(\frac{\pi}{2}s)\cos(\frac{\pi}{2}k)+\cos(\frac{\pi}{2}s)\sin(\frac{\pi}{2}k).$ 
Note that $\zeta(k+s)L(s)=\sum_{n\ge1}F_k(n)n^{-s},$ for $\Re(s)>1.$ Therefore, replacing $s$ by $-s$ in our integral in (2.5), and employing [8, pg.406]
$$\int_{0}^{\infty}y^{s-1}\cos(2\pi y)dy=(2\pi)^{-s}\Gamma(s)\cos(\frac{\pi}{2}s),$$
$$\int_{0}^{\infty}y^{s-1}\sin(2\pi y)dy=(2\pi)^{-s}\Gamma(s)\sin(\frac{\pi}{2}s),$$
both valid for $0<\Re(s)<1,$ we obtain the Fourier series in the right hand side in Theorem 1.1. 
\end{proof}

1390 Bumps River Rd. \\*
Centerville, MA
02632 \\*
USA \\*
E-mail: alexpatk@hotmail.com, alexepatkowski@gmail.com

\end{document}